\theoremstyle{plain}
\newtheorem{theorem}{Theorem}
\newtheorem{lemma}[theorem]{Lemma}
\newtheorem{question}[theorem]{Question}
\newtheorem{proposition}[theorem]{Proposition}
\theoremstyle{definition}
\newtheorem{remark}[theorem]{Remark}
\theoremstyle{remark}
\begin{document}
\title{A locally F-finite Noetherian domain that is not F-finite}

\author{Tiberiu Dumitrescu  and Cristodor Ionescu }

\address{Facultatea de Matematica si Informatica,University of Bucharest,14 A\-ca\-de\-mi\-ei Str., Bucharest, RO 010014,Romania}\email{tiberiu\_dumitrescu2003@yahoo.com, tiberiu@fmi.unibuc.ro, }

\address{Simion Stoilow Institute of Mathematics of the Romanian Academy, P. O. Box 1-764, RO-014700 Bucharest, Romania}\email{cristodor.ionescu@imar.ro, cristodor.ionescu@gmail.com}

\thanks{2000 Mathematics Subject Classification: Primary 13A35, Secondary 13F40.}
\thanks{Key words and phrases: Frobenius morphism, F-finite rings, excellent rings.}

\begin{abstract}\noindent Using an old example of Nagata, we construct a Noetherian ring of prime characteristic $p,$ whose Frobenius morphism is  locally finite, but not finite.
\end{abstract}

\maketitle

\section{Introduction}
 Let $p>0$ be a  prime number. We assume that all rings have characteristic $p,$ that is they contain a field of characteristic $p.$ For such a ring $A,$ one can define the Frobenius morphism: 
$$F_A:A\to A,\ F_A(a)=a^p,\ a\in A.$$  
The Frobenius morphism is  playing a major role in studying the properties of $A.$ In \cite[Th. 2.1]{K}  Kunz proved   that a Noetherian local ring of characteristic $p$ is regular if and only if the Frobenius morphism of $A$ is flat. This was the starting point in the study of singularities in characteristic $p$. An important property of Frobenius is its finiteness. A ring $A$ is called \textit{F-finite} if the Frobenius morphism of $A$ is a finite morphism, that is $A$ is a finite $A$-module via $F$. Rings that are $F$-finite  have important properties. For example such rings are excellent, as it is proved by Kunz \cite[Th. 2.5]{K2}. Conversely, a reduced Noetherian ring with $F$-finite total quotient ring  is excellent if and only if it is $F$-finite  \cite[Cor. 2.6]{DS}. Recall that an \textit{excellent ring} is a Noetherian ring $A$ such that (see \cite[Def. p. 260]{Mat}):
\par\noindent\quad i) the formal fibers of $A$ are geometrically regular;
\par\noindent\quad ii) any $A$-algebra of finite type has open regular locus;
\par\noindent\quad iii) $A$ is universally catenary.
\par\noindent Excellent rings were invented by Grothendieck, in order to avoid pathologies in the behaviour of Noetherian local rings.
\par\noindent In \cite{DM} Datta and Murayama asked the following: 
\begin{question}\label{prob}Let $A$ be a Noetherian domain of prime characteristic $p>0.$ Suppose that for any prime ideal $\mathfrak{p}$ of $A$, the localization $A_\mathfrak{p}$ is F-finite. Does it follow that $A$ is F-finite?
 \end{question}

\par\noindent We  show that a certain specialization of Nagata's example  \cite[\S 5]{N}, gives a negative answer to the above question for any prime $p.$ 
\par\noindent All the rings will be commutative and  unitary. Moreover, we fix a prime number $p>0.$

\section{The example}
Assume first that $p$ is odd. Let  $K$ be an algebraically closed field of characteristic $p$ and $X$ an indeterminate. 
Consider the ring
$$
A:=K[X,\frac{1}{\sqrt{(X+a)^3}+\sqrt{b^3}};\ a,b\in K,b\neq 0].
$$
For each square radical above, we 
{\em choose} one of its   two values. Hence the denominator $\sqrt{(X+a)^3}-\sqrt{b^3}$ is not in our list.

\begin{remark}\label{(1)} i) Note that 
$$
\sqrt{(X+a)^3}=\frac{1}{\sqrt{(X+a)^3}+\sqrt{b^3}}((X+a)^3-b^3)+\sqrt{b^3}\in A.
$$

ii) It follows from i) that $A$ is a fraction ring of $K[X,\sqrt{(X+a)^3};\ a\in K]$, which in turn is an integral extension
 of $K[X]$.

iii)  By ii) $A$ is at most one dimensional.
\end{remark}

\begin{lemma}\label{factor}
 The factor ring $A/(X,\sqrt{X^3})$ is isomorphic to $K$, while the factor ring  $A/(X)$ is isomorphic to $K[Y]/(Y^2)$, where $Y$ is an indeterminate.
\end{lemma}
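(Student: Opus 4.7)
The plan is to construct a surjective $K$-algebra map $\psi \colon A \to K[Y]/(Y^2)$ with kernel $(X)$. The second isomorphism then reads off directly, and the first follows by further killing $\psi(\sqrt{X^3}) = Y$.

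The first step bounds $A/(X)$ from above. Writing $\bar y_a$ for the image of $\sqrt{(X+a)^3}$ in $A/(X)$ and $\beta_a := \sqrt{a^3} \in K$, I would observe that for any $a \neq 0$, taking $b = a$ in the definition of $A$ makes $\sqrt{(X+a)^3} + \sqrt{a^3}$ a unit in $A$, so $\bar y_a + \beta_a$ is a unit in $A/(X)$; together with $(\bar y_a - \beta_a)(\bar y_a + \beta_a) = \bar y_a^2 - a^3 = 0$, this forces $\bar y_a = \beta_a \in K$. Each generator $1/(\sqrt{(X+a)^3} + \sqrt{b^3})$ of $A$ then reduces mod $X$ to an element of $K$ (when $a \neq 0$) or of $K + K\bar y_0$ (when $a = 0$, via $(\bar y_0 + \beta_b)^{-1} = \beta_b^{-1} - \beta_b^{-2}\bar y_0$, valid since $\bar y_0^2 = 0$). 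Hence $A/(X) = K + K\bar y_0$ with $\bar y_0^2 = 0$, giving a canonical surjection $K[Y]/(Y^2) \twoheadrightarrow A/(X)$ via $Y \mapsto \bar y_0$.

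The second step constructs $\psi$ explicitly to upgrade this surjection to an isomorphism. Using Remark \ref{(1)}(ii), $B := K[X, \sqrt{(X+a)^3}: a \in K]$ admits the presentation $K[X, T_a]/(T_a^2 - (X+a)^3)$; both sides are domains sharing the fraction field $K(X)(\sqrt{X+a}: a \in K)$, so the natural surjection between them is an iso. Define $\psi$ on $B$ by $X \mapsto 0$, $T_a \mapsto \beta_a$ for $a \neq 0$, $T_0 \mapsto Y$. To extend $\psi$ to $A = S^{-1} B$, each $\psi(\sqrt{(X+a)^3} + \sqrt{b^3})$ must be invertible in $K[Y]/(Y^2)$: for $a = 0$, the image is $Y + \beta_b$ with nonzero constant term; for $a \neq 0$, it is $\beta_a + \beta_b$, nonzero because either $\beta_a^2 = a^3 \neq b^3 = \beta_b^2$, or else $a^3 = b^3$ forces $\sqrt{b^3} = \sqrt{a^3}$ and the sum is $2\beta_a \neq 0$ (using $p$ odd). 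Then $\psi(\bar y_0) = Y \neq 0$, so the surjection of the previous paragraph is an iso, and a further quotient by $\bar y_0$ yields $A/(X, \sqrt{X^3}) \cong K$.

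The delicate point is the well-definedness of $\psi$ in the borderline case $a^3 = b^3$ with $a \neq b$: this is the only place the consistency convention $\sqrt{c} \leftrightarrow c$ is essential. Without it, both $\sqrt{(X+a)^3} + \sqrt{a^3}$ and $\sqrt{(X+a)^3} - \sqrt{a^3}$ would be units of $A$ whose product is $(X+a)^3 - a^3 = X(X^2 + 3aX + 3a^2)$, forcing $X$ to be a unit and collapsing $A/(X)$ to zero.
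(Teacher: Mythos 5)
Your overall architecture is sound, and your handling of the localization step and of the one-value-choice subtlety (the case $a^3=b^3$) is correct and parallels the paper's own care on that point. But there is a genuine gap at the single most important step. In your second step you write that $B:=K[X,\sqrt{(X+a)^3}:a\in K]$ ``admits the presentation $K[X,T_a]/(T_a^2-(X+a)^3)$'' because ``both sides are domains sharing the fraction field $K(X)(\sqrt{X+a}:a\in K)$.'' That the abstract quotient $R:=K[X,T_a;a\in K]/(T_a^2-(X+a)^3;a\in K)$ is a domain is not obvious, and it is exactly equivalent to the field-theoretic fact that $[K(X)(\sqrt{X+a_1},\dots,\sqrt{X+a_n}):K(X)]=2^n$ for distinct $a_i$ --- i.e., that no new relations among the square roots exist beyond $T_a^2=(X+a)^3$. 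This is precisely what the paper establishes by invoking Kneser's Theorem \cite[Th. 5.1]{Kar} (equivalently, by adapting the classical argument that $\sqrt{p_n}\notin\mathbb{Q}(\sqrt{p_1},\dots,\sqrt{p_{n-1}})$ for distinct primes). Asserting the domain property begs the question: for multiplicatively dependent radicands the analogous quotient is \emph{not} a domain (e.g. $K[X,T_1,T_2]/(T_1^2-X,\,T_2^2-X)$ contains the zero divisors $T_1\pm T_2$), so the claim genuinely rests on the independence of the classes of $X+a$ modulo squares in $K(X)^*$, which you never address. Without the presentation, your map $\psi$ is not known to be well defined on $B$, and your whole second step collapses; your first step alone only bounds $A/(X)$ from above and cannot exclude, say, $A/(X)\cong K$.

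A secondary looseness: even granting that $R$ is a domain, ``surjection between domains with the same fraction field'' does not by itself force injectivity (a domain can surject onto a proper quotient with an abstractly isomorphic fraction field). The correct deduction uses integrality: $R$ is integral over $K[X]$, the kernel of $R\to B$ is a prime meeting $K[X]$ in $(0)$, and a nonzero prime of a domain integral over $K[X]$ must contract to a nonzero prime of $K[X]$; hence the kernel is zero. Once these two points are repaired, your proof works, and its packaging is mildly different from the paper's: the paper computes $A/(X)$ directly from the presentation by eliminating $T_a$ for $a\neq 0$, whereas you combine an upper bound ($A/(X)=K+K\bar y_0$) with a retraction $\psi$ exhibiting $K[Y]/(Y^2)$ as a quotient of $A/(X)$. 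Both routes, however, stand or fall with the same input that your write-up omits.
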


\begin{proof}
From Kneser's Theorem \cite[Th. 5.1]{Kar} we  obtain that  
$$
\sqrt{X+c}\ \notin \  K(X, \sqrt{X+a},\ a\in K,  a\neq c)
$$
for every $c\in K$ (this also follows by  adapting the well-known argument for $\sqrt{p_n}\notin\mathbb{Q}(\sqrt{p_1},\ldots,\sqrt{p_{n-1}}),$ when $p_1,\ldots,p_n$ are distinct primes).
Consequently, we get a ring isomorphism
$$
\frac{K[X, T_a,\ a\in K]}{(T_a^2-(X+a)^3, a\in K)}
\simeq K[X, \sqrt{(X+a)^3},\ a\in K]
$$
sending each indeterminate $T_a$ into $\sqrt{(X+a)^3}$, which extends to an isomorphism
$$
\frac{K[X, T_a,(T_a+\sqrt{b^3})^{-1}, \ a,b\in K,b\neq 0]}{(T_a^2-(X+a)^3, a\in K)}
\simeq A.
$$
It follows that
$$
A/(X)\simeq \frac{K[T_a,(T_a+\sqrt{b^3})^{-1}, \ a,b\in K,b\neq 0]}{(T_a^2-a^3, a\in K)}
$$
$$
\simeq 
\frac{K[T_a,(T_a+\sqrt{b^3})^{-1}, \ a,b\in K,b\neq 0]}{(T_0^2,T_a-\sqrt{a^3}, a\in K,a\neq 0)}
$$
$$
\simeq 
\frac{K[T_0,(\sqrt{a^3}+\sqrt{b^3})^{-1},(T_0+\sqrt{b^3})^{-1}, \ a,b\in K-\{0\}]}{(T_0^2)}\simeq \frac{K[T_0]}{(T_0^2)}
$$
so $A/(X,\sqrt{X^3})$ is isomorphic to $K$.
Note that $\sqrt{a^3}+\sqrt{b^3}$   is nonzero for $a,b\in K-\{0\}$, by our initial one-value-choice for  $\sqrt{b^3}$. Also note that $ T_0+\sqrt{b^3}$ is a unit modulo $ T_0^2$. 
\end{proof}

\begin{lemma}\label{spec} The nonzero prime ideals of $A$  are $(X+a,\sqrt{(X+a)^3})A$ with $a\in K$. In particular, $A$ is a  Noetherian domain of dimension one.
\end{lemma}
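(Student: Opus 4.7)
The plan is to combine Lemma \ref{factor}, applied to every translate $X+a$, with integrality over $K[X]$.

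First I would exploit the translation symmetry of the construction: substituting $Z=X+a$, the parameter set $\{a+c : c\in K\}$ equals $K$, so
$$A = K[Z,(\sqrt{(Z+c')^3}+\sqrt{b^3})^{-1}\, :\, c',b\in K,\ b\neq 0],$$
i.e.\ $A$ has the same presentation in $Z$ as in $X$. Therefore Lemma \ref{factor} (applied with $Z$ in place of $X$) yields $A/(X+a,\sqrt{(X+a)^3})A\cong K$, so each $P_a:=(X+a,\sqrt{(X+a)^3})A$ is a maximal ideal of $A$.

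Next I would show that every nonzero prime $P$ of $A$ equals some $P_a$. By Remark \ref{(1)}(ii), $A$ is a fraction ring of $B:=K[X,\sqrt{(X+c)^3}\, :\, c\in K]$, so $Q:=P\cap B$ is a nonzero prime of $B$. Since $B$ is integral over $K[X]$ and $K$ is algebraically closed, $Q\cap K[X]=(X+a)K[X]$ for some $a\in K$. Hence $X+a\in P$, and then $(\sqrt{(X+a)^3})^2=(X+a)^3\in P$ forces $\sqrt{(X+a)^3}\in P$. So $P_a\subseteq P$, and maximality of $P_a$ gives $P=P_a$.

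Finally, Noetherianity follows from Cohen's theorem, since the zero ideal and each $P_a$ are finitely generated (by two elements). All nonzero primes being maximal in the nonzero domain $A$, and the chain $(0)\subsetneq P_a$ being available for any $a\in K$, we conclude $\dim A=1$ (matching the upper bound from Remark \ref{(1)}(iii)).

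The main obstacle I expect is the translation step: one must verify carefully that the presentation of $A$ really is invariant under $X\mapsto X+a$ (which is immediate once one notes the indexing sets coincide), so that Lemma \ref{factor} transplants to any $X+a$. After this, the rest is routine integral-extension bookkeeping together with Cohen's criterion.
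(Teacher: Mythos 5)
Your proof is correct and follows essentially the same route as the paper's: Lemma \ref{factor} transported by the translation symmetry identifies the primes containing each $X+a$, Remark \ref{(1)} ii) together with integrality over $K[X]$ shows every nonzero prime contains some $X+a$, and Cohen's theorem gives Noetherianity. The only cosmetic differences are that you conclude $P=P_a$ from maximality of $P_a$ (rather than from $A/(X+a)$ being local) and that you spell out the translation invariance which the paper invokes implicitly.
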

\begin{proof} By Lemma \ref{factor}, $ (X,\sqrt{X^3})$ is the only prime ideal of $A$ containing $X$.
By part ii) of Remark \ref{(1)}, every nonzero prime ideal of $A$ contains some $X + a$, so our assertion follows from Lemma \ref{factor},  performing a 
translation in $K$. The final assertion follows from Cohen's Theorem \cite[Th. 3.4]{Mat}.
\end{proof}

\begin{proposition}\label{locffin}
  $A_\mathfrak{m}$ is F-finite for each maximal ideal $\mathfrak{m}$ of $A$.
\end{proposition}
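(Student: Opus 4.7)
The plan is to fix a maximal ideal $\mathfrak{m}\subseteq A$, write it by Lemma \ref{spec} as $\mathfrak{m}=(X+c,y_c)A$ for some $c\in K$ (with the abbreviation $y_a:=\sqrt{(X+a)^3}\in A$ for $a\in K$), and exhibit a finite module generating set for $A_{\mathfrak{m}}$ over its subring $A_{\mathfrak{m}}^p=F(A_{\mathfrak{m}})$. Setting $u=X+c$ and $v=y_c$, the defining relation is $v^2=u^3$; since $p$ is odd, $v^p=u^{3(p-1)/2}v$, so in particular both $u^p$ and $v^p$ lie in $A_{\mathfrak{m}}^p$. My candidate generating set is
\[
W=\{u^iv^j:0\le i\le p-1,\ 0\le j\le 1\},
\]
and I would let $M=\sum_{w\in W}A_{\mathfrak{m}}^p\,w\subseteq A_{\mathfrak{m}}$. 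Using $u^p\in A_{\mathfrak{m}}^p$ and $v^2=u^3$ to reduce exponents, every product of two elements of $W$ lies in $A_{\mathfrak{m}}^p\cdot W$, so $M$ is a subring of $A_{\mathfrak{m}}$. The task is thereby reduced to showing every $K$-algebra generator of $A$ lies in $M$; inversion of units of $A_{\mathfrak{m}}$ will then upgrade the containment $A\subseteq M$ to the desired equality $A_{\mathfrak{m}}=M$.

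The easy cases are $X=u-c\in M$ and $y_c=v\in W$. The heart of the argument is to place $y_a$ in $M$ for the infinitely many $a\ne c$. For this I would exploit three facts: (i) $y_a$ is a unit in $A_{\mathfrak{m}}$ because $y_a^2\equiv(a-c)^3\ne0\pmod{\mathfrak{m}}$; (ii) $y_a^p=(X+a)^{3(p-1)/2}y_a\in A_{\mathfrak{m}}^p$; and (iii) since $K$ is algebraically closed, $K=K^p\subseteq A_{\mathfrak{m}}^p$. Noting that $(X+a)^p=u^p+(a-c)^p$ is a unit in $A_{\mathfrak{m}}^p$, the Frobenius identity
\[
\frac{1}{X+a}=\frac{(X+a)^{p-1}}{(X+a)^p}=\frac{1}{u^p+(a-c)^p}\sum_{i=0}^{p-1}\binom{p-1}{i}(a-c)^{p-1-i}u^i
\]
expresses $1/(X+a)$ as an $A_{\mathfrak{m}}^p$-linear combination of $1,u,\ldots,u^{p-1}$; raising it to the appropriate power and multiplying by $y_a^p$ puts $y_a=y_a^p/(X+a)^{3(p-1)/2}$ into $M$.

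An analogous Frobenius trick handles each $z_{a,b}=y_a+\sqrt{b^3}$: the value $z_{a,b}^p=y_a^p+(\sqrt{b^3})^p$ lies in $A_{\mathfrak{m}}^p$ and is a unit there, while $z_{a,b}^{p-1}$ is a polynomial in $y_a$ and therefore already lies in $M$, so $z_{a,b}^{-1}=z_{a,b}^{p-1}/z_{a,b}^p\in M$. This yields $A\subseteq M$. Finally, for any $s\in A\setminus\mathfrak{m}$ the element $s^p$ is a unit in $A_{\mathfrak{m}}^p$, so $1/s=s^{p-1}/s^p\in M$, whence $A_{\mathfrak{m}}\subseteq M$, and $A_{\mathfrak{m}}=M$ is a finite $A_{\mathfrak{m}}^p$-module so $A_{\mathfrak{m}}$ is F-finite. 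I expect the $y_a$ step with $a\ne c$ to be the main obstacle: a direct argument shows $y_a\notin K[u,v]_{(u,v)}$ in general (the equation $f^2=(X+a)^3g^2$ has no solution with $f,g\in K[u,v]$, $g\notin(u,v)$, because $(X+a)^3$ is not a square in the rational function field), so no naive polynomial expression for $y_a$ in $u,v$ exists; it is precisely the Frobenius expansion above — which crucially uses $K=K^p$ — that produces a uniform placement of all infinitely many $y_a$ inside $M$.
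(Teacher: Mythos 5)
Your proof is correct and is essentially the paper's own argument: both hinge on the same Frobenius identities --- $\sqrt{(X+a)^3}=\bigl(\sqrt{(X+a)^3}\bigr)^p/(X+a)^{3(p-1)/2}$ for $a\neq c$ (using that $X+a$ is a unit in $A_\mathfrak{m}$) and $1/s=s^{p-1}(1/s)^p$ for units $s$ --- together with $K=K^p$, to conclude that $A_\mathfrak{m}=(A_\mathfrak{m})^p[X+c,\sqrt{(X+c)^3}]$, which is module-finite over $(A_\mathfrak{m})^p$; your $M$ is exactly this ring. The only organizational differences are that the paper first translates to $\mathfrak{m}=(X,\sqrt{X^3})$ and funnels the containments through the subring $B=K[X,\sqrt{(X+a)^3};\ a\in K]$, of which $A$ is a fraction ring, whereas you work at an arbitrary maximal ideal and treat the generators of $A$ and the localization step explicitly.
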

\begin{proof}
Performing a translation in $K$, it suffices to work with $\mathfrak{m}=(X,\sqrt{X^3})$. 
As noted in Remark \ref{(1)}, ii), $A$ is a fraction ring of 
$$
 B:=K[X,\sqrt{(X+a)^3};\ a\in K].
$$
For $a\in K-\{0\}$, we have that $X+a$ is a unit of $A_\mathfrak{m}$ so
$$
\sqrt{(X+a)^3}=\frac{(\sqrt{(X+a)^3})^p}{(X+a)^{pk}}(X+a)^{(p-1)k}\in (A_\mathfrak{m})^p[X,\sqrt{X^3}]
$$
where $k$ is the integer $3(p-1)/2$.
Hence
$$
B\subseteq (A_\mathfrak{m})^p[X,\sqrt{X^3}]\subseteq A_\mathfrak{m}
$$
Let $\mathfrak{n}=\mathfrak{m}A_\mathfrak{m}\cap B$. Since $A$ is a fraction ring of $B$, we get
$$
A_\mathfrak{m}=B_\mathfrak{n}\subseteq (A_\mathfrak{m})^p[X,\sqrt{X^3}],\ \ \mbox{\ therefore\ \ } A_\mathfrak{m}= (A_\mathfrak{m})^p[X,\sqrt{X^3}].
$$\end{proof}

\begin{lemma}\label{reg} The regular locus of $A$ is $\{(0)\}$. 
\end{lemma}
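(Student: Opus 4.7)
The plan is to exploit Lemma~\ref{factor} (via translation, as in Lemma~\ref{spec}) to rule out regularity at every maximal ideal. Since $A$ is a one-dimensional Noetherian domain by Lemma~\ref{spec}, the generic point $(0)$ is automatically regular (the localization is a field), so the content of the statement is that $A_\mathfrak{m}$ is not regular for any maximal $\mathfrak{m}$. Such an $A_\mathfrak{m}$ is a one-dimensional Noetherian local domain, hence regular if and only if it is a DVR, so it suffices to exclude that possibility.

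After performing a translation in $K$, I can reduce to the maximal ideal $\mathfrak{m}=(X,\sqrt{X^3})$; the same translation trick invoked in the proof of Lemma~\ref{spec} transports the conclusions of Lemma~\ref{factor} to this $\mathfrak{m}$. Assume for contradiction that $A_\mathfrak{m}$ is a DVR with valuation $v$ and uniformizer $\pi$. Because $K[Y]/(Y^2)$ is already a local ring, the isomorphism $A/(X)\simeq K[Y]/(Y^2)$ from Lemma~\ref{factor} yields
$$
A_\mathfrak{m}/XA_\mathfrak{m}\;\simeq\; K[Y]/(Y^2),
$$
a module of length $2$ over itself, so $v(X)=2$. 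On the other hand the identity $(\sqrt{X^3})^2=X^3$ forces $v(\sqrt{X^3})=\tfrac{1}{2}v(X^3)=3$, so $v(\sqrt{X^3})\geq v(X)$, which in a DVR means $\sqrt{X^3}\in XA_\mathfrak{m}$. But under the isomorphism of Lemma~\ref{factor}, $\sqrt{X^3}$ corresponds to the nonzero element $Y\in K[Y]/(Y^2)$, so $\sqrt{X^3}\notin XA_\mathfrak{m}$. This contradiction shows $A_\mathfrak{m}$ is not a DVR, hence not regular.

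There is no serious obstacle: the one subtlety is that passing from $A/(X)\simeq K[Y]/(Y^2)$ to $A_\mathfrak{m}/XA_\mathfrak{m}\simeq K[Y]/(Y^2)$ should require a localization, but since $K[Y]/(Y^2)$ is already local with maximal ideal $(Y)$, localizing at the image of $\mathfrak{m}$ does nothing. The translation step is handled exactly as in Lemma~\ref{spec}, and the "one-value-choice" convention used in Lemma~\ref{factor} survives the translation, so its conclusions apply at each closed point.
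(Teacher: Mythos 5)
Your proof is correct, but it follows a genuinely different route from the paper's. The paper argues globally: if $A_{\mathfrak{m}_0}$ were regular for even one maximal ideal, translation invariance would make every $A_\mathfrak{m}$ regular, hence $A$ normal; since $\bigl(\sqrt{X^3}/X\bigr)^2 = X \in A$, normality forces $\sqrt{X}=\sqrt{X^3}/X \in A$, so $X$ divides $\sqrt{X^3}$ in $A$, which contradicts Lemma~\ref{factor} (if $\sqrt{X^3}\in XA$ then $A/(X)=A/(X,\sqrt{X^3})$, but one quotient is $K[Y]/(Y^2)$ and the other is $K$). You instead argue purely locally at each maximal ideal: after translation, if $A_\mathfrak{m}$ were a DVR, the identity $2v(\sqrt{X^3})=3v(X)$ gives $v(\sqrt{X^3})=\tfrac{3}{2}v(X)\geq v(X)$, hence $\sqrt{X^3}\in XA_\mathfrak{m}$, contradicting $A_\mathfrak{m}/XA_\mathfrak{m}\simeq K[Y]/(Y^2)$ with $\sqrt{X^3}$ mapping to $Y\neq 0$. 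The terminal contradiction is the same in both proofs (divisibility of $\sqrt{X^3}$ by $X$ against Lemma~\ref{factor}), but the mechanism producing the divisibility differs: integral closure of a normal domain in the paper, valuation arithmetic in a DVR for you. Your version buys self-containedness at the local level --- it never needs the globalization step ``regular at one point implies regular everywhere implies normal'' --- and it carefully justifies the localization of Lemma~\ref{factor}, correctly observing that since $K[Y]/(Y^2)$ is local, localizing at the image of $\mathfrak{m}$ changes nothing. The paper's version buys brevity, checking normality once globally. One small remark: your length computation $v(X)=2$, while correct, is not needed; the inequality $v(\sqrt{X^3})=\tfrac{3}{2}v(X)\geq v(X)$ holds for any nonnegative value of $v(X)$, which is all the contradiction requires.
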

\begin{proof}
Suppose that $A_{\mathfrak{m}_0}$ is  regular for some maximal ideal $\mathfrak{m}_0$ of $A$.
By our translation argument, it follows that $A_\mathfrak{m}$ is  regular for each maximal ideal $\mathfrak{m}$ of $A$. Then  $A$ is normal, so $\sqrt{X}\in $A.  Hence $X$ divides $\sqrt{X^3}$ in $A$, which  contradicts Lemma \ref{factor}.
\end{proof}
\begin{proposition}\label{ffinit}
  $A$ is not F-finite.
\end{proposition}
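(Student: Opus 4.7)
The plan is to obtain a contradiction by combining Lemma~\ref{reg} with the fact that F-finite rings are excellent. Suppose, toward a contradiction, that $A$ is F-finite. Then by Kunz's theorem \cite[Th. 2.5]{K2}, $A$ is excellent, so by part ii) of the definition of excellence, the regular locus of $A$ (as an $A$-algebra of finite type) is an open subset of $\operatorname{Spec}(A)$.

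By Lemma~\ref{reg}, this regular locus equals $\{(0)\}$. So $\{(0)\}$ would be open in $\operatorname{Spec}(A)$, meaning there exists a nonzero $f\in A$ with $D(f)=\{(0)\}$; equivalently, $f$ lies in every maximal ideal of $A$. Since $A$ is a one-dimensional Noetherian domain (Lemma~\ref{spec}) and $f\neq 0$, Krull's principal ideal theorem forces $\dim(A/(f))=0$, so $A/(f)$ is Artinian and therefore has only finitely many maximal ideals. Hence $f$ can lie in only finitely many maximal ideals of $A$.

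On the other hand, Lemma~\ref{spec} produces a distinct maximal ideal $(X+a,\sqrt{(X+a)^3})A$ for each $a\in K$, and since $K$ is algebraically closed (hence infinite), $A$ has infinitely many maximal ideals. This contradicts the previous paragraph, so $A$ is not F-finite. I do not foresee a substantive obstacle: the only point that has to be handled carefully is verifying that $\{(0)\}$ cannot be open in $\operatorname{Spec}(A)$, which is the topological content of the argument above.
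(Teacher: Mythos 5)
Your proof is correct, and it follows essentially the same route as the paper: both obtain a contradiction by combining Lemma~\ref{reg} with the fact that F-finiteness of $A$ would force the regular locus of $A$ to be open in $\operatorname{Spec}(A)$. The only divergence is how that openness is justified: the paper's one-line proof cites \cite[Cor. 2.3]{K}, which directly gives openness of the regular locus for Noetherian F-finite rings, whereas you route through excellence, via \cite[Th. 2.5]{K2} together with property ii) of the definition of an excellent ring; these are closely related results of Kunz, so this is a difference of citation rather than of structure. What you add beyond the paper's terse proof is the explicit verification that $\{(0)\}$ cannot be open: if $D(f)=\{(0)\}$ for some nonzero $f$, then $f$ would lie in every maximal ideal, yet $A/(f)$ is a zero-dimensional Noetherian, hence Artinian, ring with only finitely many maximal ideals, while Lemma~\ref{spec} exhibits the infinitely many distinct maximal ideals $(X+a,\sqrt{(X+a)^3})A$, $a\in K$, of $A$ ($K$ being algebraically closed, hence infinite). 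This non-openness is exactly the point the paper leaves implicit, and your handling of it is correct.
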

\begin{proof}
Appy Lemma \ref{reg} and \cite[Cor. 2.3]{K}.
\end{proof}

From Propositions \ref{locffin} and  \ref{ffinit} we get:

\begin{theorem}\label{exemplu}
A is a one-dimensional Noetherian domain that is not F-finite, such that $A_\mathfrak{p}$ is F-finite for any prime ideal $\mathfrak{p}$ of A.
\end{theorem}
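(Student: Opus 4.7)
The plan is to assemble the ingredients already established and then handle the one prime that is not covered by Proposition \ref{locffin}, namely the generic point $(0)$.

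From Lemma \ref{spec}, $A$ is a one-dimensional Noetherian domain, and from Proposition \ref{ffinit}, $A$ is not F-finite. Proposition \ref{locffin} takes care of $A_\mathfrak{m}$ for every maximal ideal $\mathfrak{m}$. Since $\dim A = 1$, the only remaining prime is $(0)$, so I would reduce the theorem to showing that $L := \mathrm{Frac}(A) = A_{(0)}$ is F-finite as a field, i.e.\ that $[L:L^p]<\infty$.

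To prove this, I would invoke Remark \ref{(1)} ii), which identifies $L$ with the fraction field of $B = K[X,\sqrt{(X+a)^3};\ a\in K]$. For each generator $y_a := \sqrt{(X+a)^3}$ the defining relation $y_a^2=(X+a)^3$ together with the fact that $p$ is odd gives
$$y_a^p = y_a \cdot (y_a^2)^{(p-1)/2} = y_a\cdot (X+a)^{3(p-1)/2},$$
so $y_a \in L^p\cdot K(X)$ for every $a\in K$. Consequently $L = L^p\cdot K(X)$. Using the standard compositum formula for a purely inseparable step, together with the perfection of $K$ (which implies $K(X)^p = K(X^p)$ and $[K(X):K(X^p)]=p$), one concludes
$$[L:L^p] = [L^p\cdot K(X):L^p] = [K(X):K(X)\cap L^p] \leq [K(X):K(X^p)] = p,$$
which is the required finiteness.

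The main obstacle is precisely this F-finiteness at the generic point: it does not follow formally from F-finiteness at the maximal ideals, so the short field-theoretic computation above is essential, and it is where the hypothesis that $p$ is odd enters (one needs $y_a^{p-1}$ to land in $K(X)$). Everything else in the proof reduces to quoting the preceding propositions and lemmas.
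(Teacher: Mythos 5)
Your proof is correct, and its skeleton is the same as the paper's: Lemma \ref{spec} gives the one-dimensional Noetherian domain, Proposition \ref{ffinit} gives non-F-finiteness, and Proposition \ref{locffin} covers the maximal ideals. The difference is in the treatment of the generic point. The paper's proof is literally the single line ``From Propositions \ref{locffin} and \ref{ffinit} we get,'' leaving the prime $(0)$ implicit; the standard fact filling that gap is that F-finiteness passes to localizations: if $A_\mathfrak{m}=\sum_i (A_\mathfrak{m})^p x_i$, then any $a/s$ equals $\sum_i (r_i/s)^p x_i$ where $as^{p-1}=\sum_i r_i^p x_i$, so $A_{(0)}=\mathrm{Frac}(A)=(A_\mathfrak{m})_{(0)}$ is F-finite. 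This also means your remark that F-finiteness at $(0)$ ``does not follow formally from F-finiteness at the maximal ideals'' is not quite right: it does follow, by exactly this localization argument. Your substitute argument is nevertheless valid and self-contained: the identity $y_a=y_a^p/(X+a)^{3(p-1)/2}$ giving $L=L^p\cdot K(X)$ is a fraction-field version of the same trick the paper uses inside the proof of Proposition \ref{locffin}, and it buys you the explicit bound $[L:L^p]\le p$ without invoking Proposition \ref{locffin} at all. One small correction: in your displayed chain the middle step should be an inequality, $[L^p\cdot K(X):L^p]\le [K(X):K(X)\cap L^p]$, since the compositum degree formula is in general only an inequality (a spanning set of $K(X)$ over $K(X)\cap L^p$ spans the compositum over $L^p$, but need not stay independent); this does not affect the conclusion $[L:L^p]\le p<\infty$.
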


\begin{remark}\label{car2}
In characteristic two, a similar  example can be constructed as follows.
Let  $K$ be an algebraically closed field of characteristic two and $X$ an indeterminate. 
Consider the ring
$$
B:=K[X, \sqrt[3]{(X+a)^4}, \frac{1}{\sqrt[3]{(X+a)^8}+
\sqrt[3]{(X+a)^4}\sqrt[3]{b^4} +\sqrt[3]{b^8}};\ a,b\in K,b\neq 0].
$$
For each cube radical above, we 
{\em choose} one of its   three values. 
By Kneser's Theorem \cite[Th. 5.1]{Kar}, it follows that
$$ [K(X,\sqrt[3]{X+a_1},..., \sqrt[3]{X+a_n}):K(X)] = 3^n
$$ 
whenever $a_1$,...,$a_n$ are distinct elements of $K$, so we get a ring isomorphism 
$$
\frac{K[X, T_a, (T_a^2+ T_a\sqrt[3]{b^4} +\sqrt[3]{b^8})^{-1};\ a,b\in K,b\neq 0]}{(T_a^3 - (X+a)^4, \ a\in K)}\simeq B
$$
sending each indeterminate $T_a$ into $\sqrt[3]{(X+a)^4}$.
It easily follows that $B/(X)\simeq K[T_0]/(T_0^3)$ and $B/(X,\sqrt[3]{X^4})\simeq K$. Now all arguments used above can be  adapted to show that $B$ is locally $F$-finite but not $F$-finite.

\end{remark}

{\bf Acknowledgement.} We thank Monica Lewis for pointing us out a misleading argument in a previous version on this paper.

 \end{document}